\numberwithin{equation}{section}
\newtheorem*{acknowledgments}{Acknowledgments}
\newtheorem{theorem}{Theorem}
\newtheorem{remark}{Remark}
\newtheorem{lemma}{Lemma}
\newtheorem{definition}{Definition}
\begin{document}
\title{Random Sturm Liouville Operators}

\author[R.\ del Rio]{Rafael del Rio}

\address{IIMAS, UNAM, 04510~M\'exico DF, M\'exico}

\email{delrio@leibniz.iimas.unam.mx}

\date{\today}

\keywords{Random operators, Sturm Liouville problem, Anderson type Model, eigenvalues}
\subjclass[2000]{Primary 81Q10, 34L05, 47B80, 60H25 ; Secondary 34F05, 34B24}

\begin{abstract}
 Selfadjoint Sturm-Liouville operators $H_\omega$ on $L_2(a,b)$ with random potentials are considered 
and it is proven, using positivity conditions, that for almost every $\omega$ the operator $H_\omega$ does not share eigenvalues with a broad family of random operators and 
in particular with  operators generated in the same way as   $H_\omega$ but in 
 $L_2(\tilde a,\tilde b)$ where $(\tilde a,\tilde b)\subset(a,b)$. 
\end{abstract}

\maketitle

\section{Introduction}

 This note is about eigenvalues of selfadjoint operators in Hilbert space $L_2(a,b)$,
$( -\infty  \leq a,b\leq \infty)$  which are generated by expressions of the 
form $$H_\omega =- {d^2\over dx} +v(x) +\sum_{n\in I\subseteq\mathbb{Z}}\omega(n)f_n(x)$$
where $\omega(n)$ are independent random variables with continuous
(may be singular) probability distributions. The operators considered here are in a sense more general than the  Anderson type model,
 which normally requires the random variables to be identically distributed. 

For  Schr\"odinger and Jacobi operators
with ergodic potentials it is well known that the probability of a given $\lambda\in\mathbb{R}$
to be an eigenvalue is zero (see \cite{cycon}). The same holds for the operators $H_\omega$ mentioned above, when $\lambda$ is independent of $\omega$. 
In case  $\lambda$ depends on $\omega =\{\omega (n)\}_{n \in I}$, the number $\lambda(\omega)$ could be an eigenvalue of $H_\omega$ for every $\omega$.
 Nevertheless it will be proven below that if the point $\lambda$  is independent of one entry $\omega(n_0)$, but depends otherwise arbitrarily on the sequence $\omega$, then $\lambda(\omega)$ is almost surely
not an eigenvalue of  $H_\omega$, provided  $f_{n_0}$ is positive in an interval.  
From here it will follow that  $H_\omega$ does not share eigenvalues with a large class of random operators.

 
There are interesting relations  between the spectra of selfadjoint operators generated in different Hilbert spaces through the same fixed differential expression.  The spectrum of an operator $H$ defined on an $ L_2(a,b)$, for example,  may be approximated by the spectrum of regular
problems $H_n$  generated through the same differential expression as $H$  but defined  on $L_2(a_n,b_n)$ where $(a_n,b_n)$ are subintervals of $(a,b)$. In fact  eigenvalues of $H$ are limits of eigenvalues of $H_n$ if $a_n\rightarrow a$ and $b_n\rightarrow b$ (see \cite{W1}).
 Here i show in a probabilistic context, that eigenvalues of  operators defined in the same way as $H_\omega$, just in a different $L_2$ spaces, are not eigenvalues of $H_\omega$ almost surely. In particular  the  regular operators $H_{\omega n} $ mentioned above, only share eigenvalues with $H_\omega $ in a set of $\omega 's$  with  zero measure. 


 This work is organized as follows. In section \ref{prel} a generalization of a classical result is presented, about  the set of parameters
$\lambda $ where an operator $H_\lambda $ has a given eigenvalue. In  section \ref{main result} the operators are introduced in a probabilistic setting and 
it is proven that they do not share eigenvalues with other families of random operators. In section \ref{appl}  particular cases are mentioned, where previous theorems can be applied.



\section{Preliminaries}\label{prel}

Let us consider Sturm-Liouville differential expressions 
\begin{equation}\label{A}
(\tau u)(x)= {d^2\over dx}u(x) +q(x)u(x)\quad \mbox{where} \quad x\in (a,b)
\end{equation} 
 and $q$ is a real valued locally integrable function. assume
 the limit point case occurs at $a$ or that $\tau$ is regular by $a$ and the same possibilities for the point $b$. For these concepts see \cite{w}.
 Let $$D_\tau=\{u\in{L_2(a,b)}:u,u\prime\quad\mbox{absolutely continuous in} \quad(a,b),\quad \tau f \in L_2(a,b)\}$$ and consider the selfadjoint realization $H$ of $\tau$ on $L_2(a,b)$ defined as 
\begin{equation}\label{B}
Hu=\tau u
\end{equation} 
$$D(H)=\left\{u\in D_\tau:
\begin{aligned}u(a)cos(\alpha)+u\prime(a)sin(\alpha)=0\quad \mbox{in case}\;\tau\;\mbox{regular by}\; a\\ u(b)cos(\beta)+u\prime(b)sin(\beta)=0\quad\mbox{in case}\;\tau\;\mbox{regular by}
 \;b\end{aligned}\right\}$$
 
  Let  $f:(a,b)\longrightarrow\mathbb{R}$ be an integrable function such that 
\begin{eqnarray}
\label{f}f(x)\left\{\begin{aligned} &>0 \quad\mbox{for almost all}\quad x\in (c,d) \\&=0\quad\mbox{otherwise}\end{aligned}\right.
\end{eqnarray} 
where $[c,d]\subset(a,b)$. In the expression for $\tau$, set $q(x)=v(x)+\lambda f(x)$ with $\lambda \in \mathbb{R}$ and $v$ is a fixed locally integrable function. Denote the operator generated by this potential with $H_\lambda$. For $\gamma,\theta \in [0,\pi)$ let us define the regular operator $H_\lambda^{\theta\gamma}$ 
in $L_2(c,d)$ as the operator generated through the same differential expression as $H_\lambda$ and the boundary conditions 
\begin{eqnarray}\label{bc1}u(c)cos(\theta)+u\prime(c)sin(\theta)=0\\\label{bc2} 
u(d)cos(\gamma)+u\prime(d)sin(\gamma)=0
\end{eqnarray}
For fixed $E\in\mathbb{R}$ define$$A(E):=\{\lambda \in\mathbb{R}:H_\lambda\varphi=E\varphi\;\mbox{for some}\;\varphi\not\equiv 0 \}$$
The eigenvalues of an operator $H$ will be denoted by $\sigma_p(H)$ that is  $$\sigma_p(H):=\{r\in\mathbb{R}: H\varphi=r\varphi\;\mbox{for some}\;\varphi\not\equiv 0\}$$

 I shall need the following 

\begin{lemma}\label{lema1}
Assume $A(E)\not=\emptyset$. There exist $\theta_0,\gamma_0 \in[0,\pi)$ such that $$\lambda\in A(E)\Longleftrightarrow E\in\sigma_p(H_\lambda^{\theta_0\gamma_0})$$
\begin{proof}
  Take $\lambda_0\in A(E)$. Then $H_{\lambda_0}\varphi=E\varphi$ for some $\varphi\in D(H_{\lambda_0})$. Let
 $\theta_0 \,, \gamma_0 \in [0,\pi)$ be the points where
\begin{eqnarray}\label{cond.1}
\varphi(c)cos(\theta_0)+\varphi\prime(c)sin(\theta_0)=0\\\label{cond.2}  
\varphi(d)cos(\gamma_0)+\varphi\prime(d)sin(\gamma_0)=0
\end{eqnarray} 
hold.

$\Rightarrow )$  If $\lambda=\lambda_0$  the assertion follows straightforward since $H_{\lambda_0}^{\theta_0\gamma_0}\varphi=E\varphi $. If  $\lambda \in A, \;\;\lambda\not=\lambda_0$, then $H_\lambda\psi=E\psi$ for some $\psi\not\equiv 0$. 
Therefore, there exist $\theta,\;\gamma \in [0,\pi)$ such that boundary conditions  \ref{bc1} and \ref{bc2} hold for $\psi$.
If we prove that $\theta=\theta_0$ and $\gamma=\gamma_0$  then $H_\lambda^{\theta_0\gamma_0}\psi=E\psi $ and therefore $E\in\sigma_p(H_\lambda^{\theta_0\gamma_0})$. Let us prove $\gamma=\gamma_0$. The proof for $\theta$ is analogous.

a) Assume limit point case (lpc) at b. . If $\gamma\not=\gamma_0$ then $\psi $ and $\varphi $ are linearly independent in $[d,b)$
since $\psi =k\varphi $ for a Constant $k$ would imply $\gamma=\gamma_0$. Therefore any solution $u$ of $\tau u =Eu$
in $[d,b)$ can be written as $u=c_1\varphi +c_2\psi $. Since $\psi $ and $\varphi $ are in $L_2$ ,then  $u$ is in $L_2$ too and we get a contradiction to the lac condition.  This argument is analogous to the one given in \cite{dr} p. 429 .

b) Assume b is regular. Then the vectors $\varphi$ and $\psi$ defined above satisfy $$u(b)cos(\beta)+u\prime(b)sin(\beta)=0$$ and the equation$$\tau u =Eu \quad\mbox{for}\quad x\in[d,b]$$ because f vanishes outside $(c,d)$ (condition \ref{f}).
 Therefore  $W(\varphi , \psi)(b)=0$ where $W$ denotes the Wronskian. This implies linear dependence between $\varphi$ and $\psi$  and therefore $\gamma=\gamma_0$.

$\Leftarrow )$ The proof is similar to the one given in \cite{dr} p. 428 . The details are left to the reader. This direction will not be needed in the rest of this paper.
\end{proof}

\end{lemma}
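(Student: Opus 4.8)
The plan is to read off $\theta_0,\gamma_0$ from a single witness and then show that this one choice works for every $\lambda\in A(E)$ simultaneously. Fix $\lambda_0\in A(E)$, so $H_{\lambda_0}\varphi=E\varphi$ with $\varphi\not\equiv0$, and let $\theta_0,\gamma_0\in[0,\pi)$ be the (unique) angles for which $\varphi(c)\cos\theta_0+\varphi'(c)\sin\theta_0=0$ and $\varphi(d)\cos\gamma_0+\varphi'(d)\sin\gamma_0=0$. Before anything else one should check that these are well defined, i.e.\ that the pairs $(\varphi(c),\varphi'(c))$ and $(\varphi(d),\varphi'(d))$ are nonzero: if $\varphi$ and $\varphi'$ both vanished at $d$, then uniqueness for the Cauchy problem of $\tau u=Eu$ (the coefficient $v+\lambda_0 f$ being locally integrable) would force $\varphi\equiv0$ on $(c,d)$, hence $\varphi(c)=\varphi'(c)=0$, hence $\varphi\equiv0$ on $(a,c]$ and on $[d,b)$ as well, contradicting $\varphi\not\equiv0$.

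For the implication $\lambda\in A(E)\Rightarrow E\in\sigma_p(H_\lambda^{\theta_0\gamma_0})$, take an eigenfunction $\psi$ of $H_\lambda$. The crux is that since $f$ vanishes outside $(c,d)$, on the two outer pieces $(a,c]$ and $[d,b)$ the potentials $v+\lambda_0 f$ and $v+\lambda f$ both collapse to $v$; hence on $[d,b)$ the functions $\varphi$ and $\psi$ solve one and the same equation $\tau u=Eu$, both are square integrable near $b$, and both satisfy the boundary condition at $b$ when $\tau$ is regular there. I then split into the two endpoint behaviours at $b$: in the limit point case the space of $L_2$ solutions near $b$ is one-dimensional, so $\psi$ and $\varphi$ are proportional on $[d,b)$; in the regular case $W(\varphi,\psi)$ is constant on $[d,b]$ and vanishes at $b$ by the common boundary condition there, hence vanishes at $d$; either way $\psi$ obeys the boundary condition with angle $\gamma_0$ at $d$. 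The mirror argument on $(a,c]$ gives angle $\theta_0$ at $c$. Consequently $\psi|_{(c,d)}$ is admissible for $H_\lambda^{\theta_0\gamma_0}$, it is $\not\equiv0$ (otherwise $\psi(c)=\psi'(c)=0$ and the initial-value argument above would give $\psi\equiv0$), and $H_\lambda^{\theta_0\gamma_0}\bigl(\psi|_{(c,d)}\bigr)=E\,\psi|_{(c,d)}$, so $E\in\sigma_p(H_\lambda^{\theta_0\gamma_0})$.

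For the reverse implication, let $g$ be an eigenfunction of $H_\lambda^{\theta_0\gamma_0}$ and extend it to $\tilde g$ on $(a,b)$ by solving $\tau u=Eu$ to the left of $c$ with Cauchy data $(g(c),g'(c))$ and to the right of $d$ with Cauchy data $(g(d),g'(d))$. Then $\tilde g$ and $\tilde g'$ are absolutely continuous on $(a,b)$, $\tilde g$ solves $\tau u=Eu$ throughout (the coefficients agree outside $(c,d)$), $\tilde g\not\equiv0$, and $\tilde g$ satisfies the prescribed boundary conditions at $a$ and $b$ when $\tau$ is regular there. What remains is $\tilde g\in L_2(a,b)$; this is clear on $(c,d)$, and near $b$ one notes that $\tilde g$ satisfies the boundary condition with angle $\gamma_0$ at $d$, hence lies in the one-dimensional solution space at $d$ cut out by that condition, which already contains $\varphi|_{[d,b)}$ (nonzero because $(\varphi(d),\varphi'(d))\neq(0,0)$). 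Therefore $\tilde g=k\,\varphi$ on $[d,b)$ for some scalar $k$, so $\tilde g$ inherits square integrability near $b$ — and the boundary condition at $b$ if regular — from $\varphi$; likewise near $a$. Thus $\tilde g\in D(H_\lambda)$, $H_\lambda\tilde g=E\tilde g$, and $\lambda\in A(E)$.

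The delicate point, in both directions, is the endpoint bookkeeping in the limit point case: converting the statement ``$\psi$ obeys angle $\gamma_0$ at $d$'' into ``$\psi$ is a scalar multiple of $\varphi$ on $[d,b)$'', and thereby transferring both square integrability and the boundary condition at $b$ between the half-line $(d,b)$ and the finite interval $(c,d)$. This is exactly where the limit point hypothesis and the localization $[c,d]\subset(a,b)$ of the support of $f$ enter; the remainder is routine ordinary differential equations — Cauchy uniqueness for locally integrable coefficients and constancy of the Wronskian on any interval where the equation is unchanged.
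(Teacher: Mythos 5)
Your proof is correct and follows essentially the same route as the paper: read off $\theta_0,\gamma_0$ from one eigenfunction of $H_{\lambda_0}$, then force every other eigenfunction to satisfy the same angles at $c$ and $d$ via the one-dimensionality of the $L_2$ solution space in the limit point case and the vanishing Wronskian in the regular case. You additionally supply the nondegeneracy checks on $(\varphi(c),\varphi'(c))$, $(\varphi(d),\varphi'(d))$ and a complete proof of the converse implication (which the paper delegates to \cite{dr} and does not use); both additions are sound.
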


 The following theorem presents a generalization of a classical result. See theorem 8.3.1 of \cite{Atkinson} or
theorem 8.26 of \cite{w}.

\begin{theorem}\label{thm1}
 For any fixed $E\in \mathbb{R}$, the set $A(E)$ is at most countable.
\end{theorem}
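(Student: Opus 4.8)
The plan is to use Lemma \ref{lema1} to reduce the statement to the discreteness of the zero set of an entire function. First, if $A(E)=\emptyset$ there is nothing to prove, so I would assume $A(E)\neq\emptyset$ and fix $\theta_0,\gamma_0\in[0,\pi)$ as furnished by Lemma \ref{lema1}, so that $\lambda\in A(E)$ precisely when $E\in\sigma_p(H_\lambda^{\theta_0\gamma_0})$.

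Next I would introduce, for each $\lambda\in\mathbb{C}$, the solution $\varphi_\lambda$ on $[c,d]$ of $-u''+(v+\lambda f)u=Eu$ with initial data $\varphi_\lambda(c)=\sin\theta_0$, $\varphi_\lambda'(c)=-\cos\theta_0$; this data is not identically zero and makes $\varphi_\lambda$ satisfy the boundary condition \eqref{bc1} at $c$. Since $v$ and $f$ are integrable on the compact interval $[c,d]\subset(a,b)$ and the coefficient $v+\lambda f-E$ depends analytically (indeed affinely) on $\lambda$, the standard theory of linear ordinary differential equations shows that $\lambda\mapsto\varphi_\lambda(d)$ and $\lambda\mapsto\varphi_\lambda'(d)$ are entire functions. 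I then put
$$G(\lambda):=\varphi_\lambda(d)\cos\gamma_0+\varphi_\lambda'(d)\sin\gamma_0,$$
which is entire. If $\lambda\in A(E)$, then $E\in\sigma_p(H_\lambda^{\theta_0\gamma_0})$, and since the boundary condition at $c$ pins down the eigenfunction up to a scalar, that eigenfunction is a multiple of $\varphi_\lambda$; the boundary condition \eqref{bc2} at $d$ then forces $G(\lambda)=0$. Thus $A(E)\subseteq\{\lambda\in\mathbb{R}:G(\lambda)=0\}$, and it suffices to prove $G\not\equiv 0$, for then the zero set of $G$ is discrete in $\mathbb{C}$, hence at most countable.

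To show $G\not\equiv 0$ I would argue by contradiction: if $G\equiv 0$ on $\mathbb{R}$, then $E\in\sigma_p(H_\lambda^{\theta_0\gamma_0})$ for every real $\lambda$, so some eigenvalue branch of the regular selfadjoint operator $H_\lambda^{\theta_0\gamma_0}$ stays constantly equal to $E$. This contradicts strict monotonicity of the eigenvalues in $\lambda$: by the Hellmann--Feynman formula a normalized eigenfunction $\psi_\lambda$ gives $\frac{d}{d\lambda}E(\lambda)=\int_c^d f\,|\psi_\lambda|^2\,dx>0$, the integral being strictly positive since $f>0$ almost everywhere on $(c,d)$ by \eqref{f} while $\psi_\lambda$, a nonzero continuous function, is nonzero on a set of positive measure.

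The routine parts are the analytic dependence of $\varphi_\lambda(d)$ and $\varphi_\lambda'(d)$ on $\lambda$ and the bookkeeping with the boundary conditions; the step I expect to need the most care is the last one, since it is exactly here that the positivity hypothesis \eqref{f} must be used. If one prefers to avoid differentiating eigenvalue branches, an alternative for that step is to show that $\inf\sigma(H_\lambda^{\theta_0\gamma_0})\to+\infty$ as $\lambda\to+\infty$: otherwise a sequence of normalized eigenfunctions $u_k$ with parameters $\lambda_k\to\infty$ would be bounded in $H^1(c,d)$ and satisfy $\int_c^d f\,|u_k|^2\,dx\to 0$, and a compactness argument would produce a nonzero limit $u$ with $\int_c^d f\,|u|^2\,dx=0$, again contradicting \eqref{f}; hence for $\lambda$ large $E$ lies below the spectrum and $G(\lambda)\neq 0$.
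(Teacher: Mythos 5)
Your proof is correct and follows the same route as the paper: both reduce the statement, via Lemma \ref{lema1}, to counting the parameters $\lambda$ for which $E$ is an eigenvalue of the regular problem $H_\lambda^{\theta_0\gamma_0}$ on $(c,d)$. The only difference is that the paper disposes of that second step by citing Theorem 8.3.1 of Atkinson, whereas you supply a self-contained proof of it (the entire function $G(\lambda)$ whose zero set contains $A(E)$, shown to be nontrivial by strict monotonicity of the eigenvalue branches --- which is exactly where the positivity of $f$ on $(c,d)$ enters, as in the classical argument).
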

 \begin{proof}
 For fixed $E$ there is at most a countable set of $\lambda$ for which  $E\in\sigma_p(H_\lambda^{\theta_0\gamma_0})$, see for example theorem 8.3.1  \cite{Atkinson}. The result follows then from Lemma\ref{lema1}.
 \end{proof}

\section{main result}
\label{main result}
Fix $n_1,n_2$ in $\mathbb{Z}\cup\{\infty\}\cup\{-\infty\}$, $n_1<n_2$ and define an interval $I$ of $\mathbb{Z}$ as follows$$I=\{n\in\mathbb{Z}:n_1<n<n_2\}$$

The linear space of real valued sequences $\{\omega(n)\}_{n\in I}$ will be denoted by $\mathbb{R}^I$. Let us introduce a measure in $\mathbb{R}^I$ as follows. 
 Let
$\{p_n\}_{n\in I}$ be a sequence of arbitrary probability measures on
$\mathbb{R}$  and consider the product measure $\mathbb{P}
=\mathop{\times}_{n\in I}p_n$ defined on the product $\sigma$-algebra
$\mathcal{F}$ of $\mathbb{R}^I$ generated by the cylinder sets, i.\,e,
by sets of the form $\{\omega:\omega(i_1)\in A_1,\dots,\omega(i_n)\in
A_n\}$ for $i_1,\dots, i_n\in I$, where $A_1,\dots,A_n$ are Arel's sets in
$\mathbb{R}$. A measure space 
$\Omega=(\mathbb{R}^I,\mathcal{F},\mathbb{P})$ is thus constructed.

For $\omega\in\Omega$ construct the function $$\tilde{q_\omega}(x)=\sum\limits_{n\in I}\omega (n) f_n (x)$$
 In the expression \ref{A} set $$q(x)=q_\omega(x)=v(x)+\tilde{q_\omega}(x)$$
where $v$ and $f_n$ are measurable locally $L_1$ functions. Denote the corresponding operator $H$ in \ref{B} by $H_\omega$.

\begin{remark}
In order to have a selfadjoint operator $H_\omega$ for all $\omega\in\Omega$ some conditions have to be imposed on $q$. In case 
$a=-\infty,b=\infty$ the condition $\int_{-N}^N|q(x)|dx=O(N^3)$ as $N\rightarrow \infty$ will assure limit point case at $a$ and $b$, for
example. I shall assume that $H_\omega$ is a family of selfadjoint operators for all $\omega\in\Omega$.
\end{remark}
\begin{definition} See \cite{Kirsch}

A family  $\{A_\omega\}_{\omega\in\Omega}  $ of selfadjoint operators is measurable if for all vectors $\varphi, \psi$ the mapping 
$$\omega\rightarrow\langle f(H_\omega)\varphi,\psi\rangle$$ is measurable for any bounded Borel function $$f:\mathbb{C}\rightarrow\mathbb{C}$$

\end{definition}

\begin{remark}

If the operators $H_\omega$ defined above are selfadjoint for all $\omega\in\Omega$, the Trotter product formula
give us mesurability for the random operator $H_\omega$. This follows by an argument similar to Prop.2 of \cite{Kirsch} or Prop.V.3.1 of \cite{cl}.
In particular the functions$$\Omega\ni\omega\rightarrow\langle E_{H_\omega}(\Delta)\varphi,\psi\rangle$$ are measurable for fixed Borel sets $\Delta$
and vectors $\varphi,\psi$, where $E_{H_\omega}$ denotes the spectral projections of  $H_\omega$.
\end{remark}

Observe that the proof of the next theorem in particular implies that any fixed point is eigenvalue of
 $H_\omega$ with $\mathbb{P}$ probability zero.

\begin{theorem}\label{main}
Let $H_\omega$ be a family of selfadjoint operators in $L_2(a,b)$ defined as above $$H_\omega =- {d^2\over dx} +v(x) +\sum_{n\in I\subseteq\mathbb{Z}}\omega(n)f_n(x)$$ For a given interval     $[c,d]\subset(a,b)$  assume there is $n_0\in I$ such that $f_{n_0}(x)>0$ for  almost all $x\in(c,d)$ and $f_{n_0}(x)=0$ if 
$x\notin(c,d)$. Suppose the probability distribution $p_{n_0}$ of the random variable $\omega(n_0)$ is continuous ($p_{n_0}(\{r\})=0$ for any $r\in\mathbb{R}$).
Let $J_\omega$ be any family of measurable selfadjoint operators such that $\sigma_p(J_\omega)$ is independent of $\omega(n_0)$. Then 
$$ \mathbb{P}(\{\omega\in\Omega:\sigma_p(J_\omega)\cap \sigma_p(H_\omega)\not=\emptyset\})=0$$

\begin{proof}
 Fix any $\varphi\in L_2(a,b)$ and define $\mu_{\omega \varphi}(\Delta)=\langle E_{H_\omega}(\Delta)\varphi,\varphi\rangle$, where 
 as before $E_{H_\omega}$ denotes the spectral projection of $H_\omega$ and $\Delta$ is any Borel set. The function
$$\omega\rightarrow\mu_{\omega \varphi}(\sigma_p(J_\omega))$$ is measurable, see Corollary 3.1 \cite{drs}. This is consequence of a remarkable result about the existence of a measurable enumeration
of eigenvalues \cite{gordon}. Applying Fubini's theorem we get 
\begin{equation}\label{C}
\int_\Omega \mu_{\omega \varphi}(\sigma_p(J_\omega))d\mathbb{P} =\int\limits_{\mathbb{R}^{I\backslash \{n_0\}}}\!\!\!\!\!d\mathbb{P}(\tilde\omega)\!\!\int\limits_\mathbb{R}\mu_{\omega\varphi}(\sigma_p(J_\omega))dp_{n_0}(\omega(n_0))
\end{equation} where $\tilde \omega=\sum\limits_{n\in{I\backslash \{n_0\}}} \omega (n)\delta (n) $. Since the measure $p_{n_0}$ is continuous and $\mu_{\omega \varphi}(\{r\})>0$ implies that $r$ is an eigenvalue of $H_\omega$ (with eigenvector $E_{H_\omega}(\{r\})\varphi $), from  theorem \ref{thm1}
it follows that$$\int\limits_\mathbb{R}\mu_{\omega \varphi}(\{r\})dp_{n_0}(\omega(n_0))=0$$ for any fixed $r\in \mathbb{R}$. Therefore, if $\sigma_p(J_\omega)=\cup_{i=1}^\infty r_i(\omega )$ then
\begin{multline*}
\int\limits_\mathbb{R}\mu_{\omega\varphi}(\sigma_p(J_\omega))dp_{n_0}(\omega(n_0))=
\int\limits_\mathbb{R}\mu_{\omega\varphi}(\cup_{i=1}^\infty r_i(\omega ))dp_{n_0}(\omega(n_0))\leq \\
\int\limits_\mathbb{R}\sum_{i=1}^{\infty} \mu_{\omega\varphi}( r_i(\omega ))dp_{n_0}(\omega(n_0))=\sum_{i=1}^{\infty}\int\limits_\mathbb{R}\mu_{\omega \varphi}(\{r_i(\omega )\})dp_{n_0}(\omega(n_0))=0 
\end{multline*}
 Recall that $\sigma_p(J_\omega)$ does not depend on $\omega(n_0)$.
 Hence the expression in \ref{C} equals zero and 
\begin{equation}\label{D}
\mu_{\omega\varphi}(\sigma_p(J_\omega))=0
\end{equation}
 for a.e. $\omega \in\Omega $.
 
  Now assume that $H_\omega$ has simple spectrum for all $\omega \in\Omega $. This happens if lpc holds at most at one of the end points $a,b$. Denote by $g_\omega $ a generating vector corresponding to $H_\omega$ and let
  $\{g_i\}_{i=1}^\infty$ be an orthonormal basis of $L_2(a,b)$. Then $g_\omega=\sum_{i=1}^\infty c_i(\omega )g_i$ and using \ref{D} we get
  $$\mu_{\omega g_\omega}(\sigma_p(J_\omega))=\langle E_{H_\omega}(\sigma_p(J_\omega))g_\omega,g_\omega\rangle=
    \sum_{i,j}\overline{ c_i(\omega )}\ c_j(\omega )\langle E_{H_\omega}(\sigma_p(J_\omega))g_i,g_j\rangle  =0$$for a.e. $\omega $.
    
 Since $g_\omega $ is a generating vector, a point $r\in\mathbb{R}$ is an eigenvalue of  $H_\omega$ if and only if 
$\mu_{\omega g_\omega}(\{r\})> 0 $. Thus the theorem follows in case $H_\omega$ has simple spectrum.

In case $H_\omega$ has multiplicity two, (the only other possible case), assume $\{g_{\omega 1}, g_{\omega 2} \}$
is a generating basis (see \cite{ak} for these concepts) . From  \ref{D} we have 
$\mu_{\omega g_j}(\sigma_p(J_\omega))=0$ for $j=1,2$. Since a point $r\in\mathbb{R}$ is eigenvalue of $H_\omega$ if and only if $\mu_{\omega g_j}(\{r\})>0$ for $j=1$ or $2$, the conclusion of the theorem follows.
\end{proof}
\end{theorem}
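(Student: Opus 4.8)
The plan is to reduce the statement about two families of operators to a statement about a single operator $H_\omega$, exploiting that the troublesome spectral overlap can only occur on the common point spectrum, and that for fixed $r$ the probability that $r \in \sigma_p(H_\omega)$ is zero by Theorem \ref{thm1}. Concretely, fix a vector $\varphi \in L_2(a,b)$ and form the spectral measure $\mu_{\omega\varphi}(\Delta) = \langle E_{H_\omega}(\Delta)\varphi, \varphi\rangle$. The key quantity is $\mu_{\omega\varphi}(\sigma_p(J_\omega))$: if this vanishes for a.e. $\omega$ and for enough vectors $\varphi$, then $\sigma_p(J_\omega) \cap \sigma_p(H_\omega) = \emptyset$ for a.e. $\omega$, since a point $r$ is an eigenvalue of $H_\omega$ precisely when $\mu_{\omega\varphi}(\{r\}) > 0$ for some $\varphi$ (in fact for a generating $\varphi$).

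The central computation is to integrate $\mu_{\omega\varphi}(\sigma_p(J_\omega))$ over $\Omega$ and apply Fubini to peel off the $\omega(n_0)$-integration:
\begin{equation*}
\int_\Omega \mu_{\omega\varphi}(\sigma_p(J_\omega))\, d\mathbb{P} = \int_{\mathbb{R}^{I\setminus\{n_0\}}} d\mathbb{P}(\tilde\omega) \int_\mathbb{R} \mu_{\omega\varphi}(\sigma_p(J_\omega))\, dp_{n_0}(\omega(n_0)).
\end{equation*}
Here I would use two inputs: first, $\sigma_p(J_\omega)$ does not depend on $\omega(n_0)$, so in the inner integral it is a fixed countable set $\bigcup_i \{r_i\}$; second, for each fixed $r$, the set of $\lambda = \omega(n_0)$ making $r$ an eigenvalue of $H_\omega$ is exactly $A(r)$ in the notation of Theorem \ref{thm1} (with $f = f_{n_0}$, since $f_{n_0}$ satisfies the positivity/support hypothesis \eqref{f}), hence countable; since $p_{n_0}$ is continuous, $\int_\mathbb{R} \mu_{\omega\varphi}(\{r\})\, dp_{n_0} = 0$. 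Summing over the countable family $r_i$ via countable subadditivity of $\mu_{\omega\varphi}$ and monotone convergence gives that the inner integral is zero, hence the whole integral is zero, hence $\mu_{\omega\varphi}(\sigma_p(J_\omega)) = 0$ for a.e. $\omega$.

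The remaining step is to upgrade "for a.e. $\omega$, $\mu_{\omega\varphi}(\sigma_p(J_\omega)) = 0$" — which holds for each fixed $\varphi$ on its own full-measure set — to a single full-measure set on which $\sigma_p(H_\omega) \cap \sigma_p(J_\omega) = \emptyset$. The device is to use a generating vector (or generating basis) for $H_\omega$: if $H_\omega$ has simple spectrum with generating vector $g_\omega$, expand $g_\omega = \sum_i c_i(\omega) g_i$ in a fixed orthonormal basis $\{g_i\}$, and note $\langle E_{H_\omega}(\sigma_p(J_\omega)) g_i, g_j\rangle = 0$ for a.e. $\omega$ for every pair $i,j$ (countably many conditions), so on the intersection of these countably many full-measure sets $\mu_{\omega g_\omega}(\sigma_p(J_\omega)) = \sum_{i,j}\overline{c_i(\omega)}c_j(\omega)\langle E_{H_\omega}(\sigma_p(J_\omega))g_i,g_j\rangle = 0$; since $g_\omega$ is generating, $r \in \sigma_p(H_\omega)$ iff $\mu_{\omega g_\omega}(\{r\}) > 0$, and we are done. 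The case of multiplicity two is handled identically with a generating basis $\{g_{\omega1}, g_{\omega2}\}$.

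The main obstacle is the measurability of $\omega \mapsto \mu_{\omega\varphi}(\sigma_p(J_\omega))$, which is what makes the Fubini step legitimate: the set $\sigma_p(J_\omega)$ moves with $\omega$, so one needs a measurable enumeration of eigenvalues. For this I would invoke the result of Gordon (as cited, via Corollary 3.1 of \cite{drs}) giving a measurable enumeration $\sigma_p(J_\omega) = \bigcup_i r_i(\omega)$ with each $r_i$ measurable; combined with the measurability of $\omega \mapsto \langle E_{H_\omega}(\Delta)\varphi,\psi\rangle$ for fixed Borel $\Delta$, this yields measurability of $\omega \mapsto \mu_{\omega\varphi}(\{r_i(\omega)\})$ and hence of the sum. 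A secondary technical point is ensuring the hypotheses of Lemma \ref{lema1} and Theorem \ref{thm1} genuinely apply to the $\omega(n_0)$-dependence: one must check that, with all other coordinates frozen, varying $\omega(n_0)$ amounts exactly to varying the coefficient $\lambda$ in front of $f = f_{n_0}$ in the potential, so that the eigenvalue-parameter set is $A(r)$ — this is immediate from the definition of $\tilde q_\omega$, but worth stating.
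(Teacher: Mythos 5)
Your proposal is correct and follows essentially the same route as the paper's own proof: the same Fubini decomposition isolating the $\omega(n_0)$-integration, the same appeal to Theorem \ref{thm1} (via the countability of $A(r)$ and continuity of $p_{n_0}$), the same measurable-enumeration input for the Fubini step, and the same generating-vector/generating-basis device to pass from a fixed $\varphi$ to the full point spectrum. Your explicit remark that one must check that freezing the other coordinates reduces the $\omega(n_0)$-dependence to the parameter $\lambda$ of Section \ref{prel} is a point the paper leaves implicit, but it is the same argument.
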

\begin{remark}\label{frem} The result can be generalized. Instead of $\sigma_p(J_\omega)$ we could take $\cup_i r_i(\omega )$ where $r_i(\omega )$ are measurable functions which do not depend $\omega(n_0)$. In particular we can take just one function $r$ with this property  and get that $r(\omega )$ is not eigenvalue of  $H_\omega$ almost surely.

\end{remark}

\section{Applications}\label{appl}
 I mention briefly some applications.
Let us consider the operator 
\begin{equation}\label{Homega}H_\omega =- {d^2\over dx} +v(x) +\sum_{n\in I\subseteq\mathbb{Z}}\omega(n)f_n(x)
\end{equation} Assume  there is an infinite set $A\subset I$ such that the sequence of measurable functions $\{f_{n_i}\}_{i\in A}$
satisfy $$f_{n_i}(x)\left\{\begin{aligned} &>0 \quad\mbox{for almost all}\quad x\in (c_i,d_i) \\&=0\quad\mbox{otherwise}\end{aligned}\right.$$
where $\{(c_i,d_i): i\in\mathbb{N}\}$ is a collection of disjoint intervals in $(a,b)$. Assume the probability distributions $\{p_{n_i}\}_{i\in A}$
introduced in section \ref{main result} are continuous. Take a subinterval $(\tilde a,\tilde b)\subset (a,b)$
which may be bounded or unbounded, such that $(c_i,d_i)\cap(\tilde a,\tilde b)=\emptyset$ for some $i\in\mathbb{N}$ and define the operator $H_\omega$ 
as in \ref{Homega} in the space $L_2(\tilde a,\tilde b)$. Let this operator be $J_\omega $ in theorem \ref{main}. Then this theorem can be applied  and we conclude that the
 operator $H_\omega$ defined in $L_2(a,b)$ does not share eigenvalues with the same operator on $L_2(\tilde a,\tilde b)$, almost always.

 
 Let us consider another application  to the same operator \ref{Homega}. Fix $n \in I$  
 and take  $r(\omega )=h(\omega (n))$ in Remark \ref{frem}, where $h$ is a real valued measurable function.
 Then $\mathbb{P} (\{\omega \in \Omega :h(\omega (n))\in \sigma_p(H_\omega)\})=0$.  That is, almost surely $h(\omega (n))$ is not an eigenvalue of $H_\omega$, for any fixed n.
 We could take $h$ as the identity, for example.
\begin{acknowledgments}
I am grateful to Prof. D. Damanik for pointing out reference \cite{gordon}.
\end{acknowledgments}



\end{document}